\documentclass[11pt]{amsart}
\usepackage{amsmath}
\usepackage{amssymb}
\usepackage{amscd}
\usepackage{hyperref}

\let\=\partial

\newtheorem{theorem}{Theorem}[section]
\newtheorem{lemma}[theorem]{Lemma}

\newtheorem{claim}[theorem]{Claim}

\theoremstyle{definition}
\newtheorem{definition}[theorem]{Definition}

\newtheorem{definitions and remarks}[theorem]{Definitions and Remarks}

\theoremstyle{remark}
\newtheorem{remark}[theorem]{Remark}

\numberwithin{equation}{section}

\newcommand{\ord}{\mathrm{ord}}

\newcommand{\al}{{\alpha}}

\newcommand{\ep}{{\epsilon}}

\newcommand{\g}{{\gamma}}
\newcommand{\Ga}{{\Gamma}}

\newcommand{\la}{{\lambda}}

\newcommand{\vp}{{\varphi}}

\newcommand{\IN}{{\mathbb N}}
\newcommand{\IQ}{{\mathbb Q}}

\newcommand{\IR}{{\mathbb R}}

\newcommand{\cC}{{\mathcal C}}

\newcommand{\cI}{{\mathcal I}}
\newcommand{\cJ}{{\mathcal J}}

\newcommand{\cM}{{\mathcal M}}

\newcommand{\cQ}{{\mathcal Q}}
\newcommand{\cR}{{\mathcal R}}

\begin{document}
\title[Arc-quasianalytic functions]{Arc-quasianalytic functions}

\author[E.~Bierstone]{Edward Bierstone}
\author[P.D.~Milman]{Pierre D. Milman}
\author[G.~Valette]{Guillaume Valette}

\address{University of Toronto, Department of Mathematics, 40 St. George Street,
Toronto, ON, Canada M5S 2E4}
\email[E.~Bierstone]{bierston@math.toronto.edu}
\email[P.D.~Milman]{milman@math.toronto.edu}
\address{Instytut Matematyczny PAN, ul. \'Sw. Tomasza 30, 31-027 Krak\'ow,
Poland}
\email[G.~Valette]{gvalette@impan.pl}
\thanks{Research supported in part by NSERC grants MRS342058, OGP0009070, and
OGP0008949.}

\subjclass{Primary 26E10, 32B20, 32S45; Secondary 03C64, 30D60}

\keywords{quasianalytic, arc-quasianalytic, blowing up, resolution of singularities}

\begin{abstract}
We work with quasianalytic classes of functions. Consider a real-valued function $y = f(x)$
on an open subset $U$ of $\IR^n$, which satisfies a quasianalytic equation $G(x,y) = 0$.
We prove that $f$ is \emph{arc-quasianalytic} (i.e., its restriction to 
every quasianalytic arc is quasianalytic) if and only if
$f$ becomes quasianalytic after (a locally finite covering of $U$ by)
finite sequences of local blowings-up. This generalizes a theorem of the first two authors
on arc-analytic functions.
\end{abstract}

\date{\today}
\maketitle
\setcounter{section}{0}

\section{Introduction}
\emph{Arc-analytic} functions are functions that are analytic along every analytic arc. Arc-analytic
functions were 
introduced by K. Kurdyka to study the geometry of \emph{arc-symmetric} sets \cite{K}.
The first two authors proved that a real-valued function $f$ on a smooth real-analytic variety $U$ 
is arc-analytic and has subanalytic graph if and only if $f$ becomes analytic after a locally
finite covering of $U$ by finite sequences of local blowings-up \cite[Thm.\,1.4]{BMarc}. The latter 
has become a basic tool
in real-analytic geometry (see, for example, \cite{KPar}, \cite{KPau} and other articles 
referenced therein). This paper deals with a
natural generalization of arc-analytic functions to \emph{quasianalytic classes}, and establishes
the analogue of \cite[Thm.\,1.4]{BMarc} for such \emph{arc-quasianalytic} functions; see
Theorem \ref{thm:main} below.

A \emph{quasianalytic class} associates, to every open
subset $U \subset \IR^n$, a subring $\cQ(U)$ of $\cC^\infty(U)$
which satisfies the basic properties of $\cC^\infty$ functions 
together with the property that the Taylor series homomorphism at any point of $U$ is injective.
See Section \ref{sec:quasian} for a precise definition. Examples are quasianalytic 
Denjoy-Carleman classes that are closed under differentiation (see \cite{BMsel}), and the class of $\cC^\infty$
functions which are definable in a given polynomially bounded $o$-minimal structure \cite{RSW}. The
former play an important part in analysis; in particular, in the study of certain partial differential
equations. 

Quasianalytic classes are in general much bigger than the class of real-analytic functions,
but nevertheless enjoy many properties of analytic functions. In \cite{BMinv}, \cite{BMsel}, the first two
authors proved resolution of singularities for finitely-generated ideals in quasianalytic classes.
The latter can be used to show that \emph{quasianalytic sets} (i.e., sets
defined by finitely many functions in a quasianalytic class) have geometric properties similar
to those of analytic sets. In particular, the corresponding Zariski topology is Noetherian \cite{BMsel}, though
it seems unknown (and doubtful) whether rings of germs of quasianalytic functions are in general Noetherian.
We can show nevertheless that resolution of singularities holds even for quasianalytic ideals
that are not necessarily finitely generated; see Theoreom \ref{thm:res}, which is an important
tool in the proof of our main theorem following.

Let $\cQ$  denote a given quasianalytic class (Section \ref{sec:quasian}).

\begin{definition} 
Let $W$ be an open subset of $\IR^n$. A function $f :W \to \IR$ is called \emph{arc-quasianalytic} if 
$f\circ \g \in \cQ((-\ep,\ep ))$, for every \emph{quasianalytic arc} $\g: (-\ep,\ep) \to W$ (the
latter means that
$\g = (\gamma_1,\ldots,\gamma_n)$, where each $\g_i \in \cQ((-\ep,\ep))$).
\end{definition}

We will say that a family of quasianalytic mappings $\{\pi_j:U_j \to U\}$ is a \emph{locally finite covering} of $U$ if 
(1) the images $\pi_j(U_j)$ are subordinate to a locally finite covering
of $U$ by open subsets; (2) if $K$ is a compact subset of
$U$, then there are compact subsets $K_j$ of $U_j$, for each $j$, such that
$K=\cup \pi_j(K_j)$ (the union is finite, by (1)). 

A \emph{modification} will denote a finite composite of \emph{admissible local blowings-up}.
(A \emph{local blowing-up} of $U$ is a blowing-up over an open subset of $U$. A (local)
blowing-up is \emph{admissible} if its centre is smooth and normal crossings with the exceptional
divisor.)

\begin{theorem}\label{thm:main}
Let $f:U \to \IR$ denote a function on a connected open set $U\subset \IR^n$.
Assume there is a nonzero quasianalytic function $G: U\times \IR \to \IR$ such that 
$G(x,f(x))\equiv 0$.  Then $f$ is arc-quasianalytic  if and only if
there exists a locally finite covering $\{\pi_j: U_j \to U\}$ such that, for each $j$,
\begin{enumerate}
\item $\pi_j$ is a modification;
\item $f\circ \pi_j$ is quasianalytic.
\end{enumerate}
\end{theorem}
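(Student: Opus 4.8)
The plan is to follow the strategy of \cite[Thm.\,1.4]{BMarc} while replacing analyticity by membership in $\cQ$ at every turn, using the quasianalytic resolution of singularities (Theorem \ref{thm:res}) as the main engine. The ``if'' direction is the easy one: if $f\circ\pi_j$ is quasianalytic for each $j$ and $\g:(-\ep,\ep)\to U$ is a quasianalytic arc, then — after shrinking $\ep$ — one lifts $\g$ through one of the modifications $\pi_j$. Since a modification is a composite of admissible local blowings-up, lifting an arc amounts to dividing the components of $\g$ by a power of a coordinate function at each stage, which preserves $\cQ$ because $\cQ$ is closed under division by a coordinate when the quotient is $\cC^\infty$ (a standard property of quasianalytic classes); one may need to reparametrize by $t\mapsto t^k$ to make the lift possible, but that also preserves $\cQ$. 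Then $f\circ\g = (f\circ\pi_j)\circ\tg$ is quasianalytic as a composite of quasianalytic maps. The local finiteness condition guarantees that some $\pi_j$ has the relevant arc germ in its image.

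For the ``only if'' direction, the first step is to apply Theorem \ref{thm:res} to the quasianalytic hypersurface $G=0$ in $U\times\IR$, or rather to resolve the singularities of the equation defining $f$: one produces a locally finite covering $\{\pi_j:U_j\to U\}$ by modifications such that, after pulling back, the graph of $f$ is described by a normal-crossings equation. More precisely, the aim is to arrange that on each $U_j$ there is a coordinate chart in which $G(\pi_j(x),y)$ factors as a unit times a product of the form $\prod(y - a_\nu(x))$ with the $a_\nu$ quasianalytic (the branches are separated), or into Weierstrass-type factors; then $f\circ\pi_j$ coincides locally with one of the finitely many quasianalytic ``roots'' $a_\nu$, at least on a dense open set. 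The subtlety — and here the arc-quasianalyticity hypothesis enters decisively — is to show that $f\circ\pi_j$ is globally equal to a single quasianalytic function on each chart, i.e., that $f$ does not jump between different branches along a set of codimension one. This is exactly where one uses that $f$ restricted to every quasianalytic arc is quasianalytic, hence in particular continuous: an arc crossing the locus where two branches meet would detect a discontinuity or a non-quasianalytic behaviour of $f\circ\g$, contradicting the hypothesis. One shows, using curve selection in the quasianalytic category (available since the Zariski topology is Noetherian, cf. \cite{BMsel}), that the set where $f\circ\pi_j$ fails to agree with a fixed quasianalytic branch is either empty or meets some quasianalytic arc, along which $f$ is then not quasianalytic.

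The main obstacle I expect is precisely this branch-selection/gluing argument: making rigorous that arc-quasianalyticity forces $f\circ\pi_j$ to land in a single quasianalytic root of the pulled-back equation, uniformly over each chart. In the analytic case this rests on arc-analyticity plus the structure of the Puiseux-type roots and on connectedness arguments on the blown-up space; in the quasianalytic setting one must substitute the quasianalytic curve selection lemma and the Noetherianity of the quasianalytic Zariski topology for the analytic tools, and be careful that the resolution $\pi_j$ can be chosen so that the exceptional divisor together with the locus of coincident branches is a normal-crossings divisor, so that the complement is locally connected and the branch is determined. A secondary technical point is bookkeeping with local finiteness: the resolution from Theorem \ref{thm:res} is only locally finite, so one must patch the choices of branch over the overlaps $\pi_j(U_j)\cap\pi_{j'}(U_{j'})$ — but since $f$ itself is a single-valued function on $U$, the branches chosen on different charts automatically agree wherever both are defined, so no genuine cocycle condition arises. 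Once these points are settled, conclusions (1) and (2) follow: (1) holds by construction of the $\pi_j$ as modifications, and (2) because $f\circ\pi_j$ has been identified with a quasianalytic function on each chart of $U_j$.
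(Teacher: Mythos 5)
The ``if'' direction of your proposal (lift the arc through the modification, possibly after reparametrizing by $t\mapsto t^k$) is the standard argument and is in line with the paper, which simply declares that direction clear.

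The ``only if'' direction, however, has a fundamental gap, and it is exactly the gap that motivated the paper. Your plan is to resolve $G=0$ so that ``$G(\pi_j(x),y)$ factors as a unit times a product of the form $\prod_\nu (y-a_\nu(x))$ with the $a_\nu$ quasianalytic \dots\ or into Weierstrass-type factors,'' and then argue via arc-quasianalyticity that $f\circ\pi_j$ selects a single branch $a_\nu$. Any such factorization is a form of Weierstrass preparation (or Hensel's lemma in several variables), and the paper explicitly points out in the introduction --- citing Childress \cite{Child} --- that Weierstrass preparation fails in general quasianalytic classes, which is precisely why the proof of \cite[Thm.\,1.4]{BMarc} cannot be transplanted. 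Resolution of singularities (Theorem \ref{thm:res}) monomializes an ideal; it does not split a $y$-regular germ $G(x,y)$ into linear or Weierstrass factors in $y$, and there is no substitute curve-selection argument that would recover such a splitting. So the central step of your proof, ``the branches $a_\nu$ exist in $\cQ$ and $f\circ\pi_j$ agrees with one of them,'' has no justification.

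The paper's actual route is genuinely different. It fixes a point, makes $G$ $y$-regular of some order $d$ after a preliminary resolution, and then inducts on $d$. The inductive step does not separate branches: it uses the implicit function theorem on $\partial^{d-1}G/\partial y^{d-1}=0$ to produce a quasianalytic approximation $\varphi$ to $f$, monomializes the coefficients $c_i(x)=(\partial^i G/\partial y^i)(x,\varphi(x))$, and then forms $g_1=x^{-\alpha/d!}(f-\varphi)$ and a rescaled $G_1(x,y)=x^{-\alpha/(d-1)!}G(x,x^{\alpha/d!}y+\varphi(x))$. Lemmas \ref{lem:machine} and \ref{lem:machine2} show that (after a ramification $x\mapsto x^{d!}$) $G_1$ is again quasianalytic and $y$-regular of strictly smaller order; Lemma \ref{lem:cont} controls continuity of $|g_1|$, and the dichotomy $\lim|g_1|\neq 0$ versus $\lim g_1=0$ (Cases I and II, with Claim \ref{claim} driving the iteration in Case II) makes the induction terminate. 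If you want to repair your proposal, this Tschirnhaus-plus-Newton-polygon style iteration is the idea you need to import; a branch-factorization approach cannot be made to work here.
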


The proof of \cite[Thm.\,1.4]{BMarc} relies on a generalization of
Hensel's lemma to several variables and thus makes use of the Weierstrass
preparation theorem. The latter does not hold in quasianalytic classes in general
\cite{Child}. We were therefore forced to imagine a rather different proof of
Theorem \ref{thm:main}, involving
a more technical iterative argument.

K. Nowak has used Theorem \ref{thm:main} to prove an interesting result on hyperbolic 
polynomials with quasianalytic coefficients and normal-crossings discriminant \cite{N}.

\section{Quasianalytic classes}\label{sec:quasian}

We follow the axiomatic framework of \cite{BMsel}. Consider a class of functions $\cQ$
given by the association, to every 
open subset $U\subset \IR^n$, of a subalgebra $\cQ(U)$ of $\cC^\infty (U)$ containing
the polynomial functions and stable under composition with a $\cQ$-mapping (i.e., a mapping
whose components belong to $\cQ$). We say that $\cQ$ is \emph{quasianalytic}
if it satisfies the following three axioms:

\begin{enumerate}
\item\label{axiom_division} \emph{Closure under division by a coordinate.} If $f \in \cQ(U)$ and
$$
f(x_1,\dots, x_{i-1}, a, x_{i+1},\ldots, x_n) = 0,
$$
where $a \in \IR$,  then $f(x) = (x_i - a)h(x),$ where $h \in \cQ(U)$.

\smallskip
\item\label{axiom_inverse_mapping} \emph{Closure under inverse.} Let $\varphi : U \to V$
denote a $\cQ$-mapping between open subsets $U$, $V$ of $\IR^n$.
Let $a \in  U$ and suppose that the Jacobian matrix
$$
\frac{\partial \varphi}{\partial x} (a) := \frac{\partial
(\varphi_1,\ldots, \varphi_n) }{\partial (x_1,\ldots, x_n)}(a)
$$
is invertible. Then there are neighbourhoods $U'$ of $a$ and $V'$ of 
$b := \varphi(a)$, and a $\cQ$-mapping  $\psi: V' \to U'$ such that
$\psi(b) = a$ and $\psi\circ \varphi$  is the identity mapping of
$U '$.

\smallskip
\item\label{axiom_quasianalytic} \emph{Quasianalyticity.} If $f \in \cQ(U)$ has Taylor expansion zero
at $a \in U$, then $f$ is identically zero near $a$.
\end{enumerate}

\begin{remark}\label{rem:axioms} Since the class $\cQ$ is closed under composition, axiom \eqref{axiom_division}
implies that, if $f \in \cQ(U)$, then all partial derivatives of $f$ belong to $\cQ(U)$. Axiom \eqref{axiom_inverse_mapping}
implies that the implicit function theorem holds for functions of class $\cQ$. 
\end{remark} 

Throughout the paper, we work with a fixed quasianalytic class $\cQ$.
The elements of $\cQ$ will be called \emph{quasianalytic functions}. 
A category of manifolds and
mappings of class $\cQ$ can be defined in a standard way. The category of $\cQ$-manifolds is closed under
blowing up with centre a $\cQ$-submanifold \cite{BMsel}.

\section{Resolution of singularities}\label{sec:res}

Resolution of singularities of a finitely generated ideal in a quasianalytic class $\cQ$ was proved in
\cite{BMinv}, \cite{BMsel}. The proof does not in fact require
that the ideal be finite.

\begin{theorem}[resolution of singularities in a quasianalytic class]\label{thm:res}
Let $U$ be an open subset of $\IR^n$ (or a $\cQ$-manifold), and let $\cI$ denote a sheaf of ideals of 
quasianalytic functions on $U$. Assume that each point of $U$ admits a neighbourhood $U'$
such that $\cI|_{U'}$ is generated by a family of sections of $\cI$ over $U'$ (not necessarily finite).
Let $K$ be a compact subset of $U$. Then there is a neighbourhood $W$ of $K$ in $U$, and a mapping
$\pi: W' \to W$ given by a composite of finitely many admissible blowings-up
(so $W'$ is a $\cQ$-manifold), such that the pull-back
$\pi^*\cI$  is a simple normal-crossings divisor.
\end{theorem}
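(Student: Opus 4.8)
The plan is to deduce Theorem \ref{thm:res} from the known finitely generated case by a Noetherianity argument. First I would recall that resolution of singularities for a \emph{finitely generated} sheaf of ideals in $\cQ$ is already available (from \cite{BMinv}, \cite{BMsel}): given finitely many generators on a neighbourhood of a compact $K$, one obtains a finite composite of admissible blowings-up $\pi:W'\to W$, $W\supset K$, after which the pullback is a simple normal-crossings divisor. The issue is purely the passage from finitely to infinitely many generators; the resolution process itself, being canonical/functorial in a strong sense in these references, does not actually see how many generators are present, only the ideal they generate locally.

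The key step is to reduce, locally, to the finitely generated case. Fix a point $a\in K$ and a neighbourhood $U'$ on which $\cI$ is generated by a (possibly infinite) family $\{g_\lambda\}_{\lambda\in\Lambda}$ of sections. Consider the germs $(g_\lambda)_a$ in the local ring $\cO_{U,a}$ of quasianalytic function germs. The crucial input is that, although $\cO_{U,a}$ may fail to be Noetherian, the associated \emph{Zariski topology} on quasianalytic sets is Noetherian \cite{BMsel}; equivalently, the quasianalytic class satisfies an effective Nullstellensatz-type finiteness at the level of zero sets and radicals. From this I would extract that the ideal generated by the $(g_\lambda)_a$ has the same integral closure / the same associated normal-crossings resolution data as an ideal generated by finitely many of the $g_\lambda$. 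Concretely: one shrinks $U'$ so that finitely many $g_{\lambda_1},\dots,g_{\lambda_r}$ already generate $\cI$ on $U'$ — this is exactly the content of Noetherianity of the Zariski topology applied to the ascending chain of ideals generated by finite sub-families, together with the usual coherence of quasianalytic ideal sheaves (axiom \eqref{axiom_division} and closure under composition yield enough local algebra for this). The point $a$ having a neighbourhood on which $\cI$ is finitely generated, we are back in the setting of \cite{BMinv}, \cite{BMsel}.

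Next I would globalize over the compact set $K$. Cover $K$ by finitely many such open sets $U'_1,\dots,U'_N$ on each of which $\cI$ is finitely generated, hence on each of which a resolution $\pi_i: W'_i\to W_i$ exists with $W_i\supset U'_i\cap K$. Because the resolution algorithm of \cite{BMinv}, \cite{BMsel} is canonical — it commutes with restriction to open sets, and more importantly with the admissible blowings-up themselves — the local resolutions agree on overlaps and patch to a single $\pi:W'\to W$ over a neighbourhood $W$ of $K$, a finite composite of admissible blowings-up, after which $\pi^*\cI$ is a simple normal-crossings divisor. The finiteness of the covering guarantees that only finitely many blowings-up are used in total.

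The main obstacle is the reduction to finite generation at a point: one must be certain that the possibly non-Noetherian local ring does not cause the naive ascending chain argument to fail. The resolution here is to phrase everything in terms of zero-sets and use the Noetherianity of the quasianalytic Zariski topology from \cite{BMsel} — this gives a stabilizing chain of \emph{closed sets} $V(g_{\lambda_1},\dots,g_{\lambda_k})$, and then one argues that once the zero-set has stabilized, the quasianalytic Nullstellensatz (or, more elementarily, a direct argument using closure under division by a coordinate after a suitable local coordinate change realizing the zero-set as normal crossings) forces the ideal to have stabilized too, at least up to the data that the resolution algorithm depends on. Everything else — coherence, patching, canonicity of the algorithm — is standard in this framework and needs only to be invoked, not reproved.
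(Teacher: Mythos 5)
Your reduction to the finitely generated case has a genuine gap. Noetherianity of the quasianalytic Zariski topology gives that a descending chain of zero sets $V(g_{\lambda_1},\dots,g_{\lambda_k})$ stabilizes, but this says nothing about stabilization of the ascending chain of ideals $(g_{\lambda_1},\dots,g_{\lambda_k})$ themselves: the paper explicitly warns that local rings of quasianalytic germs are believed not to be Noetherian, so that chain may never stabilize. Your fallback --- once the zero set has stabilized, invoke a quasianalytic Nullstellensatz to conclude stabilization of ``the data the resolution algorithm depends on'' --- fails on two counts: there is no Nullstellensatz available in quasianalytic classes, and the resolution algorithm genuinely depends on more than the zero set (it depends on orders of vanishing and the full marked-ideal structure, which distinguish, say, $(g)$ from $(g^2)$). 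If the reduction you propose were available, the theorem would simply be the known finitely-generated case restated; the point of formulating it under the weaker quasicoherence hypothesis is precisely that this reduction is not known.

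The paper's actual argument runs in the opposite direction: rather than reducing to finite generation, it observes that the desingularization algorithm of \cite{BMfunct} never truly used finite generation. The order $\ord_a\cI$ is well defined for any ideal as the minimum order of elements of the stalk, it is upper-semicontinuous in the quasianalytic Zariski topology, and the recursion through local maximal contact subspaces needs only quasicoherence at each stage. The one step that appeared to need finiteness --- the factorization $\cI=\cM(\cI)\cdot\cR(\cI)$ into monomial and residual parts in \cite[Step\,II]{BMfunct} --- is handled directly by the division axiom, which shows that $\cQ_U/\cR(\cI)$ remains quasicoherent. Your globalization-by-functoriality step is fine and is indeed used in the paper's framework, but since the local reduction to finite generation fails, your argument as a whole does not go through.
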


Let $\cQ_U$ denote the sheaf of germs of functions of class $\cQ$ on $U$. When an ideal sheaf $\cI \subset \cQ_U$ satisfies the assumption in Theorem \ref{thm:res}, one says that $\cQ_U / \cI$
is \emph{quasicoherent}.

\begin{remark}[proof of Theorem \ref{thm:res}] 
Every step of the proof of desingularization of an ideal (as presented in \cite{BMfunct},
for example) goes over
to the case that $\cQ_U / \cI$ is quasicoherent, with essentially no change. 
We recall that the desingularization
algorithm is based on resolution of singularities of a \emph{marked ideal} given locally by $\cI$
together with its maximum order $d$. The \emph{order} $\ord_a \cI$ of $\cI$ at $a \in U$ is the
minimum order of elements of $\cI_a$. Clearly, $\ord\,\cI$ is upper-semicontinuous in the
quasianalytic Zariski topology. Resolution of singularities of the marked ideal $(\cI, d)$ involves
recursively constructing and resolving associated marked ideals on local \emph{maximal contact}
subspaces of increasing codimension. The only place where the proof is not obviously the
same as in the case of a finitely-generated ideal is in \cite[Step\,II, p.\,628]{BMfunct}
--- factorization of $\cI$ as the product $\cM(\cI)\cdot\cR(\cI)$ of its \emph{monomial part} $\cM(\cI)$
and \emph{residual} (or \emph{nonmonomial}) \emph{part} $\cR(\cI)$. We need to show that
if $\cQ_U/\cI$ is quasicoherent, then $\cQ_U / \cR(\cI)$ is quasicoherent; this is a simple
consequence of the division axiom \eqref{axiom_division} in Section \ref{sec:quasian}.
\end{remark}

\section{Preliminary lemmas}\label{sec:prelim}

The lemmas of this section are needed for our proof of Theorem \ref{thm:main}. We use $\IN$
to denote the nonnegative integers.

\begin{lemma}\label{lem:contin}
Let $f:U \to \IR$ be an arc-quasianalytic function, where $U\subset \IR^n$ is a connected open set. Assume that $G(x,f(x))\equiv 0$, where $G$ is a quasianalytic function that is not identically zero. 
Then $f$ is continuous.
\end{lemma}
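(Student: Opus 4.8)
The plan is to establish continuity at an arbitrary $a\in U$; by a standard subsequence argument it is enough to prove that if $x_k\to a$ in $U$ and $f(x_k)$ converges in $[-\infty,+\infty]$ to $c$, then $c=f(a)$ --- this gives at once local boundedness of $f$ and the absence of jumps. (For $n=1$ there is nothing to prove: the affine arc $t\mapsto a+t$ has polynomial, hence quasianalytic, component, so $f$ is quasianalytic near $a$.) I would first treat the \emph{simple-zero case} $\partial_y G(a,f(a))\neq 0$. By the implicit function theorem for $\cQ$ (Remark~\ref{rem:axioms}) there are a ball $B=B(a,r)\subset U$, a number $\rho>0$, and a quasianalytic $\phi\colon B\to\IR$ with $\phi(a)=f(a)$ and $\sup_B|\phi-f(a)|<\rho/2$, such that for every $x\in B$ the value $\phi(x)$ is the \emph{unique} zero of $G(x,\cdot)$ in $(f(a)-\rho,f(a)+\rho)$. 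The key point is that then $f\equiv\phi$ on $B$, so $f$ is quasianalytic --- a fortiori continuous and locally bounded --- near $a$, whence $c=f(a)$.

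To prove $f\equiv\phi$ on $B$, fix $x_0\in B$ and apply arc-quasianalyticity to the affine arc $\gamma(t)=a+t(x_0-a)$, which has polynomial components and maps a neighbourhood of $[0,1]$ into the convex set $B$. Then $g:=f\circ\gamma$ is quasianalytic, hence continuous, with $g(0)=f(a)$ and $G(\gamma(t),g(t))\equiv 0$. The set $I=\{t\in[0,1]:g(t)\in(f(a)-\rho,f(a)+\rho)\}$ is relatively open and contains $0$. On the connected component of $I$ through $0$ we have $g(t)=\phi(\gamma(t))$ by uniqueness of the zero; if that component had a boundary point $\tau\le 1$ with $\tau\notin I$, continuity would give $g(\tau)=\phi(\gamma(\tau))$, which lies \emph{strictly} inside $(f(a)-\rho,f(a)+\rho)$ and hence in $I$, a contradiction. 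So $I=[0,1]$, $g\equiv\phi\circ\gamma$, and $f(x_0)=g(1)=\phi(x_0)$.

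For the general case --- $f(a)$ a multiple zero of $G(a,\cdot)$, $a$ in the nowhere-dense quasianalytic set $\Sigma=\{x:G(x,\cdot)\equiv 0\}$, or a priori $c=\pm\infty$ --- I would bring in resolution of singularities (Theorem~\ref{thm:res}) for the ideal generated by $G$: over a neighbourhood of a suitable compact box in $U\times\IR$ one obtains a proper composite of admissible blowings-up $\pi=(\pi^x,\pi^y)\colon M'\to U\times\IR$ with $G\circ\pi$ a normal-crossings divisor, so that locally $G\circ\pi$ is a unit times a monomial in exceptional coordinates. Since $\Gamma_f\subset Z:=G^{-1}(0)$, and $\pi^y=f\circ\pi^x$ on $\pi^{-1}(\Gamma_f)$, the equation $G(x,f(x))=0$ forces $\pi^{-1}(\Gamma_f)$ into the normal-crossings divisor $\pi^{-1}(Z)$; lifting $(x_k,f(x_k))$ through the proper surjection $\pi$, and pushing quasianalytic arcs of $M'$ forward to $U$ by $\pi^x$, one reduces --- branch by branch of $\pi^{-1}(Z)$ --- to a situation in which $G\circ\pi$ is a monomial times a unit and the analysis of the simple-zero case applies (after removing the monomial factor and, if necessary, passing to a suitable partial derivative). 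The possibility $c=\pm\infty$ (local boundedness) is excluded by the same analysis, and $\Sigma$, being nowhere dense, is reached by quasianalytic arcs from its complement.

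The crux --- and the reason the argument must differ from the arc-analytic case of \cite{BMarc} --- is that $\Gamma_f$ is \emph{not} a quasianalytic set: all we know is $\Gamma_f\subset Z$. Curve selection is therefore unavailable on $\Gamma_f$ itself, and every piece of information about $f$ must be extracted by restricting $f$ to honest quasianalytic arcs in $U$ (or arcs obtained via $\pi^x$) and then using $G(x,f(x))=0$ to decide which local branch of $Z$ the restriction follows. The simple-zero step shows this goes smoothly when the branches do not collide; making the general argument work --- separating the branches by resolution, and controlling, for a hypothetical discontinuity, a quasianalytic arc through $a$ along which $f$ is forced to jump --- is the technical heart of the matter and also where the case $c=\pm\infty$ has to be ruled out.
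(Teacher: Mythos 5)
Your proof of the simple-zero case is complete and correct, and it is not the route the paper takes: you work locally with the implicit function theorem and a connectedness argument along affine segments, which in fact yields the stronger conclusion that $f$ is quasianalytic near $a$ when $\partial_y G(a,f(a))\neq 0$. The paper instead shows that $(x_0,f(x_0))$ is the unique limit point of the graph $\Gamma$ over $x_0$: a finite limit point $p$ is eliminated by choosing, via resolution of $G$ and curve selection (\cite[Thm.\,6.2]{BMsel}), a quasianalytic arc ending at $p$ whose $U$-projection is a quasianalytic arc on which $f$ is therefore continuous; an infinite limit point is eliminated by lifting segments to $\Gamma$ (arc-quasianalyticity) and invoking the intermediate value theorem to manufacture spurious finite limit points.

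The genuine gap is in your "general case." You propose to reduce, by resolution of the ideal generated by $G$, "to a situation in which $G\circ\pi$ is a monomial times a unit and the analysis of the simple-zero case applies (after removing the monomial factor and, if necessary, passing to a suitable partial derivative)." This does not work as stated. Resolving the ideal $(\omega_i)$, $\omega_i(x)=(\partial^i G/\partial y^i)(x,y_0)$, and dividing out the resulting monomial (exactly as in the first paragraphs of the proof of Theorem \ref{thm:main}) reduces you to the case where $G$ is $y$-regular of some \emph{finite} order $d$ at each relevant point --- but not to $d=1$. Lowering $d$ to $1$ is precisely the inductive content of Theorem \ref{thm:main} itself, and using it here would be circular, since Theorem \ref{thm:main} uses Lemma \ref{lem:contin}. "Passing to a suitable partial derivative" does not fix this either: $\partial_y^{d-1}G$ is $y$-regular of order $1$, but its root is the critical-locus function $\varphi$ of the implicit function theorem, not $f$, so the simple-zero argument gives you information about $\varphi$ rather than about $f$.

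What does close the gap, and in fact needs nothing beyond the tools you already invoke, is the observation that $d<\infty$ (equivalently, $G(a,\cdot)\not\equiv 0$) is enough for the \emph{intermediate-value} argument, without the implicit function theorem: restricting $f$ to each segment from $a$ to $x_k$ is quasianalytic, hence continuous, so by IVT every value between $f(a)$ and $f(x_k)$ is attained near $a$; letting $k\to\infty$, every $y$ in the closed interval from $f(a)$ to the limit value $c$ (or the half-line toward $\pm\infty$) is an asymptotic value, hence $(a,y)\in\overline{\Gamma}\subset G^{-1}(0)$, forcing $G(a,\cdot)\equiv 0$ by quasianalyticity unless $c=f(a)$. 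This single argument handles both $c$ finite and $c=\pm\infty$, and, combined with the reduction to $G(a,\cdot)\not\equiv 0$ via resolution and a properness/descent step, yields a complete proof along your general lines. Your closing claim that "the possibility $c=\pm\infty$ is excluded by the same analysis" is, as written, an assertion rather than an argument; the segment-plus-IVT device above (which is also what the paper uses for the infinite limit point) is the missing ingredient.
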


\begin{proof}
Let $\Ga$ denote the graph of $f$. We have to show that, for any $x_0 \in U$, 
$(x_0, f(x_0))$ is the unique limit point of $\Ga$ (including $\infty$) over $x_0$. 
Given a finite limit point $p$ of $\Ga$ over $x_0$, it follows from resolution of 
singularities of $G$ that there is a
quasianalytic arc $\g(t)$, $t \in (-\ep, \ep)$, such that $\g(0) = p$ and $\g(t)$ is
a smooth point of $\Ga$, for any $t \neq 0$. (See, for example, \cite[Thm.\,6.2]{BMsel}.)
Since $f$ is arc-quasianalytic, $f$ is continuous on every quasianalytic arc (in particular, on
the projection of $\g$ to $U$), so that $p = (x_0, f(x_0))$. On the other hand, suppose
that $\Ga$ has an infinite limit point over $x_0$. Consider sequences $\{a_n\}$ and $\{b_n\}$ 
in $\Ga$ tending to $(x_0, f(x_0))$ and $\infty$. Then the straight lines joining 
the projections of $a_n$, $b_n$ lift to quasianalytic curves in $\Ga$, providing additional 
finite limit points over $x_0$ as $n$ tends to $\infty$ (a contradiction). 
\end{proof}

\begin{lemma}\label{lem:ineq}
Let $U$ be a neighbourhood of the origin in $\IR^n$, and let $c:U\to \IR$ be a quasianalytic function. Assume that $|c(x)|\leq C |x^\alpha|$, for $x$ near zero in the nonnegative quadrant 
$\{x_1\geq 0, \ldots, x_n \geq 0\}$, where $\alpha \in \IN^n$ and $C$ is a positive constant.  Then the function $x^{-\alpha} c(x)$ is quasianalytic .
\end{lemma}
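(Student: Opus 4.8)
The plan is to divide out the coordinates one at a time, reducing to the single-variable case. So it suffices to treat the following situation: $c \in \cQ(U)$ with $|c(x)| \le C|x_1|$ for $x$ near $0$ in the region $\{x_1 \ge 0\}$, and I want $x_1^{-1}c(x)$ to be quasianalytic. Once this is known, I can peel off $x_i$ for $i$ with $\alpha_i \ge 1$ successively: after dividing by $x_1$ (say), the quotient $x_1^{-1}c$ still satisfies the analogous bound $|x_1^{-1}c(x)| \le C|x^{\alpha - e_1}|$ on the nonnegative quadrant (where $\alpha \ge e_1$ componentwise), so the hypothesis is reproduced with $\alpha$ replaced by $\alpha - e_1$, and an obvious induction on $|\alpha|$ finishes. (Strictly, to reproduce the hypothesis I should check the bound survives; this follows because $x_1^{-1}c$ is continuous and equals $\lim$ of $x_1^{-1}c$ from $x_1 > 0$, where the bound is immediate by division. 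I would state this as a short remark rather than belabor it.)

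For the key single-variable step, the first move is to show that $c(0, x_2, \dots, x_n) \equiv 0$: the bound $|c(x)| \le C|x_1|$ forces the restriction to the hyperplane $x_1 = 0$ to vanish identically (approaching any such point from $x_1 > 0$). Then the division axiom \eqref{axiom_division} applies directly: $c(x) = x_1 h(x)$ with $h \in \cQ(U)$, and $h = x_1^{-1}c$ is the desired quotient. The only subtlety is that axiom \eqref{axiom_division} as stated requires $c$ to vanish on the whole hyperplane $\{x_1 = a\}$ (here $a = 0$), not just on its nonnegative part; but since $c$ is $\cC^\infty$ and vanishes on the nonnegative part of $\{x_1 = 0\}$, which is Zariski-dense in that hyperplane (it has nonempty interior in it), $c$ vanishes on all of $\{x_1 = 0\} \cap U$ by continuity of $c$ and all its derivatives together with quasianalyticity — or more simply, just by continuity. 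So the hypothesis of axiom \eqref{axiom_division} is met.

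The step I expect to require the most care is making sure the inequality hypothesis is genuinely reproduced after each division, so that the induction is legitimate — in particular handling the case where some components of $\alpha$ are zero (those coordinates are never divided out and play no role) and confirming that "near zero in the nonnegative quadrant" is preserved. I do not anticipate a serious obstacle: the whole argument is a packaging of the division axiom plus continuity, with no need for resolution of singularities or any of the heavier machinery. I would write it in three or four lines: reduce to $|\alpha| = 1$; observe $c|_{x_1 = 0} \equiv 0$; invoke \eqref{axiom_division}; induct.
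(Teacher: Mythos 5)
Your proof is correct and follows essentially the same route as the paper: both reduce to dividing out one coordinate at a time, observe that the bound forces $c$ to vanish on the hyperplane $\{x_i = 0\}$, and then apply the division axiom \eqref{axiom_division}, iterating. One small imprecision worth fixing: passing from vanishing on the nonnegative part of $\{x_1 = 0\}$ to vanishing on the full hyperplane is \emph{not} ``just by continuity'' (a continuous function can vanish on a half-space without vanishing on the whole hyperplane); the correct mechanism is the one you gave first --- the restriction $c|_{x_1=0}$ is itself quasianalytic in the remaining variables, has zero Taylor expansion at any interior point of the nonnegative part, and hence vanishes identically by axiom \eqref{axiom_quasianalytic} together with a connectedness argument.
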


\begin{proof}
Take $i$ such that $\alpha_i\neq 0$. It is enough to show that $c$ is divisible by $x_i$, because we 
can iterate the argument $\alpha_i$ times, for each $i$. Since  $|c(x)|\leq C |x^\alpha|$ in the
nonnegative quadrant, and $\alpha_i\neq 0$, the function $c$ is identically zero on the hyperplane $x_i=0$. By axiom \eqref{axiom_division}, this implies that $c$ is divisible by $x_i$.
\end{proof}

Given $x \in \IR$ and $k \in \IN\setminus \{0, 1\}$, we write $x^{1/k}$ for the positive $k^{\text{th}}$ root of $|x|$. (This
somewhat unusual convention is convenient for Lemma \ref{lem:cont} following, and will intervene again in 
Section \ref{sec:main} only in the same way, for the purpose of applying Lemma \ref{lem:cont}.) Let 
$\IQ_+$ denote the set of nonnegative rational numbers. Given a neighbourhood $U$ of the origin and an $n$-tuple $p \in \{0,1\}^n$ we set
$$
U^p :=\{ x \in U: (-1)^{p_1} x_1\geq 0,\ldots, (-1)^{p_n}x_n\geq 0\}. 
$$

\begin{lemma}\label{lem:cont}
Let $f:U\to \IR$ be a continuous arc-quasianalytic function in a neighbourhood of 
$0 \in \IR^n$, and let $g(x):=x^{-\alpha} f(x)$, where $\alpha \in \IQ^n_+$. Assume  that $|g|\leq M$ 
near $0$, for some $M>0$, and that, for any $p \in \{0,1\}^n$, there is a quasianalytic function 
$G = G_p: U^p \times (-2M,2M)\to \IR$ such that $G$ is not identically zero and
$G(x,g(x))\equiv 0,$ for any $x\in U^p$ near $0$ with $x^\alpha \neq 0$. Then  $|g|$ extends to a continuous function in a neighbourhood of $0$.

Moreover, if $\alpha \in \IN^n$, then $g$ extends to a continuous function in a neighbourhood of $0$.
\end{lemma}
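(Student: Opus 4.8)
The plan is to reduce the problem to a one-variable statement by pulling back along quasianalytic arcs, and to handle the two assertions by a bootstrapping argument on the structure of $g$. First I would observe that it suffices to prove continuity of $|g|$ at the origin along every quasianalytic arc $\g:(-\ep,\ep)\to U$, because a function that is bounded and continuous along all quasianalytic arcs and whose would-be value at $0$ is forced to be a single number will be continuous there — and the uniqueness of the limit value is exactly the kind of argument used in Lemma \ref{lem:contin}, via the graph having a unique limit point. So fix such an arc $\g$ with $\g(0)=0$; after refining, we may assume $\g$ lands in a single quadrant $U^p$, so that along $\g$ we have $x^\alpha = \pm t^{m}$ (up to a unit and a nonnegative rational power, using the convention $x^{1/k}=|x|^{1/k}$) for some $m\in\IQ_+$, and $f\circ\g\in\cQ$, hence $f\circ\g(t)=t^{\ell}\cdot(\text{unit or zero})$ with $\ell\in\IN$. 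Then $g\circ\g(t)= (f\circ\g)/(x^\alpha\circ\g)$ is, at least formally, a Puiseux-type quotient that is \emph{bounded} by hypothesis.

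The key step is then: a bounded quotient of this form, once we know it satisfies a nontrivial quasianalytic equation $G_p(x,g(x))\equiv 0$, must have a limit at $t=0$ whose absolute value is independent of the arc. Here is where I would use the equation $G_p$: substituting $x=\g(t)$ gives $G_p(\g(t), g\circ\g(t))\equiv 0$, a nontrivial quasianalytic equation in one variable $t$ satisfied by the bounded function $g\circ\g$. By quasianalytic curve selection / resolution applied to $G_p$ (as invoked for Lemma \ref{lem:contin}), the bounded branch $g\circ\g(t)$ must converge as $t\to 0^{\pm}$, and moreover $|{\lim}_{t\to 0}g\circ\g(t)|$ is a common value $L$ over all choices of $\g$ — otherwise two different finite limit points over $0$ would appear on the graph of $g$ (viewed inside the quasianalytic set $\{G_p=0\}$), contradicting that this graph is quasianalytic with a well-defined fiber structure. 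This gives the first assertion: $|g|$ extends continuously by setting its value at $0$ equal to $L$.

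For the "moreover" clause, when $\alpha\in\IN^n$, I would upgrade $|g|$ to $g$ itself. The point is that the sign ambiguity is only an issue if $g$ changes sign near $0$; but $g=x^{-\alpha}f$ with $x^\alpha$ a genuine monomial (not a Puiseux power), so on each fixed quadrant $U^p$ the sign of $x^\alpha$ is constant, and hence $g$ restricted to $U^p$ has the same sign behaviour as $f$. Along any quasianalytic arc into a fixed quadrant, $g\circ\g(t)$ converges to $\pm L$, and the sign is determined by the leading coefficients of $f\circ\g$ and $x^\alpha\circ\g$; I would check that this sign is in fact locally constant on $U\setminus\{x^\alpha=0\}$ near $0$ (because $f$, being arc-quasianalytic with graph inside $\{G=0\}$, cannot oscillate in sign across the monomial divisor without violating continuity of $f$ established in Lemma \ref{lem:contin} — more precisely, $g$ is continuous off $\{x^\alpha=0\}$ there and the quadrants are glued along coordinate hyperplanes where one can compare limits). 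Concluding that $g$ itself extends continuously then follows from the $|g|$ case together with this sign consistency.

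I expect the main obstacle to be the second paragraph: extracting a genuine limit (not merely boundedness) of the one-variable branch $g\circ\g(t)$ from the equation $G_p$, and showing the limit's absolute value is arc-independent. The subtlety is that $g\circ\g$ is a quotient of quasianalytic functions and need not itself be quasianalytic, so one cannot directly invoke the Taylor-series machinery; instead one must work inside the quasianalytic set $V_p:=\{(x,y)\in U^p\times(-2M,2M): G_p(x,y)=0\}$, which contains the graph of $g|_{U^p}$ over $\{x^\alpha\ne 0\}$, apply Theorem \ref{thm:res} to desingularize $G_p$ (or the relevant ideal), and run the unique-limit-point argument of Lemma \ref{lem:contin} in this relative setting. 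Matching up the limit values obtained from different quadrants along the common coordinate hyperplanes, so that $|g|$ (resp. $g$) is genuinely continuous and not merely separately continuous on each $U^p$, will require a small additional gluing argument but should be routine once the per-quadrant limits are in hand.
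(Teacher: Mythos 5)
Your overall plan has two genuine gaps, and the paper's proof sidesteps both with simpler mechanisms you did not find.

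First, the reduction in your opening paragraph is not sound as stated. A bounded function that is continuous along every quasianalytic arc need not be continuous, and your claim that this plus ``the would-be value at $0$ is forced to be a single number'' suffices is circular --- uniqueness of the limit over $0$ is precisely what must be proved, and you cannot borrow it from Lemma~\ref{lem:contin} because that lemma's argument requires the function to be arc-quasianalytic, which $g$ is not (that is the whole point: $g = x^{-\alpha}f$ with $\alpha$ rational need not even be continuous). The paper instead works with connected components $C$ of $\IR^n\setminus\{x^\alpha=0\}$: if $g|_C$ had two asymptotic values $a\neq b$ at $0$, then local connectedness of $C$ at $0$ together with continuity of $f$ forces every point of $(a,b)$ to be an asymptotic value, so $G(0,\cdot)$ vanishes on an interval, contradicting quasianalyticity. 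This gives a unique $\lambda_C$ per component and replaces both your per-arc convergence step and your vague appeal to desingularizing $\{G_p=0\}$ and ``a well-defined fiber structure.''

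Second, you do not supply a mechanism for matching the limits across different components (or, in your language, arc-independence). Your suggestion that two different finite limits over $0$ would ``contradict'' quasianalyticity of the graph inside $\{G_p=0\}$ is not a valid argument: the fiber of $\{G_p=0\}$ over $x=0$ may genuinely contain many $y$-values, and the graph of $g$ could a priori approach different ones along different paths. The paper's key trick, which your proposal misses, is to pass to $g^u$ with $u$ chosen so that $u\alpha\in\IN^n$. Along a segment $\gamma$ crossing from one component to another, $(f\circ\gamma)^u$ is quasianalytic, and the boundedness hypothesis plus Lemma~\ref{lem:ineq} (axiom~\eqref{axiom_division}) show $g^u\circ\gamma$ is quasianalytic, hence continuous at $0$; therefore $|g|=(g^u)^{1/u}$ extends continuously along $\gamma$ and the $|\lambda_C|$ must match. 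In the case $\alpha\in\IN^n$ the same argument applies to $g$ itself without raising to a power, giving the ``moreover'' clause immediately; your sign-consistency discussion is then unnecessary and, as written, would itself need the very continuity statement it is trying to establish.
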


\begin{proof} It is enough to show that $|g|$ or $g$  extends continuously to $0$ since the argument will
apply to every $x$ in a neighbourhood of $0$. Consider a connected component $C$ of $\IR^n \setminus \{x^\alpha=0\}$. We first show that $g_{|C}$ extends continuously to $0$, by 
contradiction.  Suppose that $g|_{C}$ has two distinct asymptotic values $a$ and $b$ at $0$. Since 
$C$ is locally connected at $0$ and $f$ is continuous, all points of the interval $(a,b)$ are also asymptotic values of $g|_{C}$. Therefore, the function $y\mapsto G(0,y)$ is identically zero on 
$(a,b)$. Since $G(0,y)$ is quasianalytic, $G(0,y)$ is identically zero,  
by axiom (\ref{axiom_quasianalytic}); a contradiction.

For each connected component $C$ of $\IR^n \setminus \{x^\alpha=0\}$, there thus exists precisely
one asymptotic value $\lambda_C$ of $g$ at $0$. We have to prove that the $|\lambda_C|$ coincide. Indeed, by axiom (\ref{axiom_division}), since $f$ is arc-quasianalytic and $\alpha \in \IQ^n$,  
there is a positive integer $u$ such that the composite of $g^u$ with a quasianalytic arc 
$\gamma$ is a quasianalytic function, 
and therefore continuous at $0$. This means that $|g|$ extends continuously along every segment passing from one connected component of $\IR^n \setminus \{x^\alpha=0\}$ to another (since $|g|=(g^u)^{1/u}$). Consequently,  the respective asymptotic values of $|g|$ must match.

Assume, moreover, that $\alpha \in \IN^n$. Since $f$ is arc-quasianalytic, $g$ extends to a 
quasianalytic function on every segment. For the same reason as above, therefore, $g$ is continuous.
\end{proof}

Lemma \ref{lem:machine} following is a reformulation of the basic idea of \cite{BMarc}.

\begin{definition}\label{def:reg}
Let $U$ denote an open subset of $\IR^n \times \IR$.
A function $G(x,y)\in \cQ(U)$ is $y$\emph{-regular of order} $k$ at $(x_0,y_0)\in U$ if  
$(\partial^k G / \partial y^k) (x_0,y_0)\neq 0$, and $(\partial^j G / \partial y^j) (x_0,y_0)=0 $ for $j<k$.
\end{definition}

In particular, $G$ is $y$-regular of order $0$ at $(x_0,y_0)$ if $G(x_0,y_0)\neq 0$. 

\begin{lemma}\label{lem:machine}
Let $U$ be an open neighbourhood of $(0,0)$ in $\IR^n \times \IR$, and
let $G: U \to \IR $ denote a quasianalytic function which is $y$-regular of order $d>1$
at $(0,0)$. Set
$c_i(x):=(\partial^i G / \partial y^i)(x,0)$, $0 \leq i \leq d$. Assume that
$c_{d-1}\equiv 0$ and that $c_i(x)=x^{\alpha (d-i)} c^*_i(x)$ (i.e., $c_i(x)$ is divisible by $x^{\alpha (d-i)}$),
for $0 \leq i< d-1$, where $\alpha \in \IN^n$ and where $c^*_k(0) \neq 0$ for some  $k <d-1$. Then 
$$
H(x,y):=x^{-\alpha d}G(x,x^{\alpha }y)
$$ 
is a quasianalytic function which is $y$-regular of order at most $k$ at $(0,0)$.
Moreover, if $\al \neq 0$, then $H(x,y)$ is quasianalytic and
$y$-regular of order at most $d-1$ at $(0,y_0)$, for any $y_0 \in \IR$
(not necessarily near $0$).
\end{lemma}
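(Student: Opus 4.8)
The plan is to replace $G$ by an explicit expansion in powers of $y$ and then carry out the division by $x^{\alpha d}$ by hand. First I would apply the division axiom \eqref{axiom_division} in the variable $y$, $d$ times, to write
\[
G(x,y) = \sum_{i=0}^{d-1}\frac{c_i(x)}{i!}\,y^i + y^d\,Q(x,y), \qquad Q\in\cQ,
\]
with $Q(x,0) = c_d(x)/d!$, so that $Q(0,0) = c_d(0)/d! \ne 0$ because $G$ is $y$-regular of order $d$ at $(0,0)$. Indeed, inductively, if $G = \sum_{i=0}^{j-1}\frac{c_i}{i!}y^i + y^j G_j$ with $G_j\in\cQ$, then taking $\partial_y^j$ at $y=0$ gives $G_j(x,0) = c_j(x)/j!$, so $G_j(x,y) - c_j(x)/j!$ vanishes on $\{y=0\}$ and hence, by \eqref{axiom_division}, equals $y\,G_{j+1}(x,y)$ with $G_{j+1}\in\cQ$, which advances the induction.

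Next I would substitute $y\mapsto x^\alpha y$; this is a polynomial, hence $\cQ$-, map (as $\alpha\in\IN^n$) carrying a neighbourhood of $(0,0)$ into $U$. Using $c_{d-1}\equiv 0$ to drop the top term of the sum and $c_i(x) = x^{\alpha(d-i)}c_i^*(x)$ for $i\le d-2$, each surviving term of the sum picks up a factor $x^{\alpha(d-i)}\cdot x^{\alpha i} = x^{\alpha d}$, while the remainder picks up $x^{\alpha d}$ from $(x^\alpha y)^d$; dividing by $x^{\alpha d}$ gives, wherever $x^\alpha\ne 0$,
\[
H(x,y) = \sum_{i=0}^{d-2}\frac{c_i^*(x)}{i!}\,y^i + y^d\,Q(x,x^\alpha y).
\]
The right-hand side is quasianalytic on a neighbourhood of $(0,0)$ (the $c_i^*$ lie in $\cQ$ by hypothesis, and $Q(x,x^\alpha y)$ is a composite of $\cQ$-maps), so it furnishes the sought-after quasianalytic $H$. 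If moreover $\alpha\ne 0$, then for any fixed $y_0\in\IR$ one has $(x,x^\alpha y)\to(0,0)$ as $(x,y)\to(0,y_0)$, so the very same formula exhibits $H$ as quasianalytic near $(0,y_0)$ as well (not necessarily near $0$).

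The $y$-regularity assertions then follow by inspection of this formula. Since $H(x,y) - \sum_{i=0}^{d-2}\frac{c_i^*(x)}{i!}y^i$ is divisible by $y^d$, we have $\partial_y^j H(0,0) = c_j^*(0)$ for $j\le d-2$; as $c_k^*(0)\ne 0$ with $k\le d-2$, the function $y\mapsto H(0,y)$ vanishes to order $\le k$ at $0$, which is exactly $y$-regularity of order at most $k$ at $(0,0)$. When $\alpha\ne 0$ one has, near $(0,y_0)$, $H(0,y) = \sum_{i=0}^{d-2}\frac{c_i^*(0)}{i!}y^i + \frac{c_d(0)}{d!}y^d =: P(y)$, a polynomial of degree exactly $d$ whose coefficient of $y^{d-1}$ vanishes. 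If $P$ vanished to order $d$ at some $y_0$, then $P(y) = \frac{c_d(0)}{d!}(y-y_0)^d$; matching the coefficients of $y^{d-1}$ forces $y_0=0$, and then $P(y) = \frac{c_d(0)}{d!}y^d$ contradicts $c_k^*(0)\ne 0$. Hence $y\mapsto H(0,y)$ vanishes to order $\le d-1$ at every $y_0\in\IR$, i.e.\ $H$ is $y$-regular of order at most $d-1$ at $(0,y_0)$.

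I expect the one delicate point to be the very first step: producing a \emph{quasianalytic} remainder $Q$, and with it the extension of $H$ across $\{x^\alpha=0\}$. This is precisely where Weierstrass division would be convenient and is unavailable; the resolution is that it is not needed — iterated use of the division axiom \eqref{axiom_division}, together with the bookkeeping identity $G_j(x,0)=c_j(x)/j!$, suffices. Everything downstream is formal manipulation of the displayed expression for $H$.
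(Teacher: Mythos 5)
Your proof is correct, and it departs from the paper's in two instructive ways. First, where the paper expands $G(x,x^\alpha y)$ by Taylor's formula with a merely $\cC^\infty$ remainder $\rho(x,x^\alpha y)$ and then has to invoke the division axiom a posteriori to conclude that $x^{-\alpha d}G(x,x^\alpha y)$ is quasianalytic, you iterate axiom~\eqref{axiom_division} in the $y$-variable to produce a genuinely quasianalytic remainder $Q$, so that the resulting formula $H(x,y)=\sum_{i\le d-2}\frac{c_i^*(x)}{i!}y^i + y^d\,Q(x,x^\alpha y)$ is manifestly of class $\cQ$ with no further divisibility argument needed; this is a small but real simplification, and it makes transparent why $H$ extends across $\{x^\alpha=0\}$. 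Second, for the assertion at $(0,y_0)$ with $\alpha\neq 0$, the paper differentiates $(d-1)$ times and evaluates, producing $\partial_y^{d-1}H(x,y)=c_d(x)\,y\,\tau(x,x^\alpha y)$ and reading off nonvanishing at $y_0\neq 0$; you instead specialize to $x=0$, note that $H(0,y)=P(y)$ is a degree-$d$ polynomial with vanishing $y^{d-1}$-coefficient (precisely because $c_{d-1}\equiv 0$), and rule out a $d$-fold zero of $P$ at any $y_0$ by a direct coefficient comparison, which also covers $y_0=0$ in one stroke. Both arguments rest on the same use of $c_{d-1}\equiv 0$ and $c_d(0)\neq 0$; yours trades the computation of a derivative for an elementary polynomial identity, and is, if anything, easier to verify.
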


\begin{proof}
Given $x$  in a neighbourhood of $0$, Taylor's formula for the function $y \mapsto G(x,y)$ at $0$ gives
$$
G(x,x^\alpha y)= \sum_{i=0}^{d-1}c_i(x) x^{\alpha i}\frac{ y^{i}}{i!}+c_d(x)x^{\alpha d}y^{d}\rho(x,x^{\alpha }  y),
$$
where $\rho$ is a $C^\infty$ function such that $ \rho (0,0)\neq 0$.
Clearly, if $\al \neq 0$, then the latter equation holds for $y$ in a neighbourhood of any fixed $y_0$ in $ \IR$, 
and $x$ close enough to zero. 

Since each $c_i (x)=x^{\alpha(d-i)} c^*_i (x)$,  we get
\begin{equation}\label{eq:G}
G(x,x^\alpha y)= \sum_{i=0}^{d-1}x^{\alpha d}c_i^*(x)  \frac{ y^{i}}{i!}+x^{\alpha d}c_d(x)y^{d}\rho(x,x^{\alpha}  y).
\end{equation}
Thus $G(x,x^\alpha y)$ is divisible by $x^{\alpha d}$, and
\begin{equation}\label{eq:H}
H(x,y)= \sum_{i=0}^{d-1}c_i^*(x) \frac{ y^{i}}{i!}+c_d(x)y^d \rho(x,x^{\alpha }  y)
\end{equation}
is  a quasianalytic function (by axiom 
\eqref{axiom_division}). 

Let $k$ denote the smallest integer such that $c_k^*(0)\neq 0$. We will show that $H$ is $y$-regular 
of order $k$ at $(0,0)$, and, if $\al \neq 0$, then $H$ is $y$-regular of order at most $d-1$ at 
any $(0,y_0)$.
>From \eqref{eq:H}, we get 
\begin{equation}\label{eq:taylorH}
\frac{\partial^{k}H}{\partial y^{k}}(x,y)=
 c^*_{k}(x)+\sum_{i=k+1}^{d-1}c^*_i(x) \frac{ y^{i-k}}{(i-k)!}+c_d(x)  y^{d-k}\sigma(x,x^{\alpha }  y),
\end{equation}
where $\sigma$ is a $\cC^\infty$ function. Thus,
\begin{equation}\label{eq:H0}
\frac{\partial^{k}H}{\partial y^{k}}(0,0)=c_k^*(0)\neq 0;
\end{equation}
i.e., $H$ is $y$-regular of order $k$ at $(0,0)$.

Now suppose that $\al \neq 0$ and consider some nonzero $y_0 \in \IR$. Since  $c_{d-1}\equiv 0$, \eqref{eq:H} also gives
\begin{equation}\label{eq:taylorH2}
\frac{\partial^{d-1}H}{\partial y^{d-1}}(x,y)=c_d(x)y \tau(x,x^{\alpha }  y),
\end{equation}
for $(x,y)$ near enough to $(0,y_0)$, where $\tau$ is a $\cC^\infty$ function such that $\tau(0,0)\neq 0$.  
Since $c_d(0)\neq 0$, the right-hand side of \eqref{eq:taylorH2} is clearly nonzero at $(0,y_0)\neq (0,0)$, as required.
\end{proof}

The following lemma is a variation of  Lemma \ref{lem:machine} that will
provide an alternative argument when the hypotheses of Lemma \ref{lem:machine} are not satisfied. 
We drop the assumption
that $c_{d-1} \equiv 0$, but can conclude that $H$ is $y$-regular of order $< d$ only near $(0,0)$.

\begin{lemma}\label{lem:machine2}
Let $G(x,y)$ be a quasianalytic function which is $y$-regular of order $d>0$ at $(0,0)$. Set 
$c_i(x):=(\partial^i G / \partial y^i)(x,0)$, $0 \leq i \leq d$.
Assume that 
$c_i(x)=x^{\alpha (d-i)} c^*_i(x)$,
for $0 \leq i< d$, where $\alpha \in \IN^n$ and where $c^*_k(0) \neq 0$ for some  $k <d$.
Then $H(x,y):=x^{-\alpha d}G(x,x^{\alpha }y)$ is a quasianalytic function in
a neighbourhood of $(0,0)$, and $H$ is $y$-regular of
order at most $k$ at $(0,0)$.
\end{lemma}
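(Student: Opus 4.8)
The plan is to rerun the first part of the proof of Lemma~\ref{lem:machine} --- the part ending at equation~\eqref{eq:H0} --- in the present, slightly more general, setting. The differences are small: $d$ is now allowed to equal $1$; we no longer assume $c_{d-1}\equiv 0$, so the $(d-1)$st term persists; and, as will become clear, we can only claim $y$-regularity at $(0,0)$, not also at points $(0,y_0)$ with $y_0\neq 0$. First I would write down Taylor's formula with integral remainder for $y\mapsto G(x,y)$ at $0$, to order $d$. Using that $c_d(0)=(\partial^d G/\partial y^d)(0,0)\neq 0$, so that the remainder may be divided by $c_d$ in a neighbourhood of $x=0$, this gives, for $(x,y)$ near $(0,0)$,
\[
G(x,x^\alpha y)=\sum_{i=0}^{d-1}c_i(x)\,x^{\alpha i}\,\frac{y^i}{i!}+c_d(x)\,x^{\alpha d}y^d\,\rho(x,x^\alpha y),
\]
where $\rho$ is $\cC^\infty$ and $\rho(0,0)\neq 0$; it is precisely this division by $c_d$ that forces the whole argument to stay in a neighbourhood of $(0,0)$.

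Next I would substitute the hypotheses $c_i(x)=x^{\alpha(d-i)}c_i^*(x)$, $0\le i<d$, which exhibits $G(x,x^\alpha y)$ as $x^{\alpha d}$ times a $\cC^\infty$ factor. Since $G(x,x^\alpha y)$ is itself quasianalytic (a composite of $G$ with a $\cQ$-mapping), iterating the division axiom~\eqref{axiom_division} exactly as in Lemma~\ref{lem:machine} shows that $H(x,y)=x^{-\alpha d}G(x,x^\alpha y)$ is quasianalytic in a neighbourhood of $(0,0)$, and there
\[
H(x,y)=\sum_{i=0}^{d-1}c_i^*(x)\,\frac{y^i}{i!}+c_d(x)\,y^d\,\rho(x,x^\alpha y).
\]

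To finish, let $k<d$ be the least index with $c_k^*(0)\neq 0$. Differentiating the last identity $k$ times in $y$ leaves $c_k^*(x)$ plus a sum of terms each divisible by $y$ --- in particular the remainder term becomes $c_d(x)y^{d-k}\sigma(x,x^\alpha y)$ with $\sigma\in\cC^\infty$ and $d-k\ge 1$ --- so evaluating at $(0,0)$ gives $(\partial^k H/\partial y^k)(0,0)=c_k^*(0)\neq 0$, whereas $(\partial^j H/\partial y^j)(0,0)=c_j^*(0)=0$ for $j<k$ by minimality of $k$. Hence $H$ is $y$-regular of order $k$, and a fortiori of order at most $k$, at $(0,0)$. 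There is no real obstacle here: the statement is a deliberately weakened companion to Lemma~\ref{lem:machine}, and the proof is a pruned copy of the first half of that one. The one point deserving a moment's attention is that the now-surviving term $c_{d-1}^*(x)y^{d-1}/(d-1)!$ of $H$ must not disturb the computation at $(0,0)$; it does not, since $d-1\ge k$, so after $k$ differentiations in $y$ it is either the leading constant term $c_k^*(x)$ (when $k=d-1$) or still divisible by $y$ (when $k<d-1$). Abandoning $c_{d-1}\equiv 0$ is exactly what costs us the statement at $(0,y_0)$ that Lemma~\ref{lem:machine} gives.
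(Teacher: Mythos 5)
Your proposal is correct and follows essentially the same route as the paper, whose proof of this lemma simply cites back to \eqref{eq:G}, \eqref{eq:taylorH}, and \eqref{eq:H0} from the proof of Lemma~\ref{lem:machine} rather than rewriting the computation as you do. One aside is slightly misattributed but harmless: the reason Lemma~\ref{lem:machine2} only claims $y$-regularity near $(0,0)$ (and not also at $(0,y_0)$, $y_0\neq 0$, as in Lemma~\ref{lem:machine} when $\alpha\neq 0$) is the dropped hypothesis $c_{d-1}\equiv 0$, as you correctly observe at the end of your write-up; the division by $c_d$ only constrains $x$ to a neighbourhood of $0$, not $y$.
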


\begin{proof}
Again by \eqref{eq:G}, $H$ is a quasianalytic function. Let $k$ be  the smallest integer $<d$ such that
$c^*_k(0) \neq 0$. It follows from \eqref{eq:taylorH} and \eqref{eq:H0} that $H$ is $y$-regular of order
$k$ at $(0,0)$.
\end{proof}

\section{Proof of the main theorem}\label{sec:main}

\begin{proof}[Proof of Theorem \ref{thm:main}]
The ``if'' direction is clear. We will prove ``only if''.
The problem is local, so we work in a neighbourhood of fixed $x_0 \in U$.

By assumption, there is a quasianalytic function $G: U \times (a,b) \to \IR$, not identically zero, and 
vanishing on the graph of $f$. We will first check that we can assume, without loss of generality, that  
$G(x_0,\cdot)$ is not identically zero.

Let $\omega_i:=(\partial^i G / \partial y^i)(x,f(x_0))$, $i \in \IN$. Apply Theorem \ref{thm:res} to the ideal  
sheaf generated by  the 
functions $\omega_i$, $i \in \IN$. This provides a composite of admissible blowings-up
$\pi:U' \to U$ such that, near any point of $z \in\pi^{-1}(x_0)$, if $\cJ_z$ denotes the ideal
generated by the restrictions of the functions $\omega_i \circ \pi$, $i \in \IN$, then, up to a local coordinate system at $z$, 
$\cJ_z$ is generated by a monomial $x^\theta$, $\theta \in  \IN^n$. This implies that every $\omega_i$ is divisible by 
$x^\theta$. We claim it also follows that 
$$
\widetilde{G}(x,y):=x^{-\theta}G(\pi(x),y)
$$
is a quasianalytic function on $U' \times (a,b)$.

To see this, take $i\leq n$ such that $\theta_i\neq 0$, where $\theta=(\theta_1,\dots,\theta_n)$. It is enough to show that 
$G(\pi(x),y)$ is divisible by $x_i$, since we can iterate the argument $\theta_i$ times for each $i$. For any fixed $x$ 
such that  $x_i=0$, the function $\widehat{G}(y):= G(\pi(x),y)$ is quasianalytic and satisfies 
$$
\frac{d^j \widehat{G}}{d y^j} (f(x_0))=\frac{\partial^j G}{\partial y^j} (\pi(x),f(x_0))=0,
$$ 
for every $j \in \IN$. Consequently, by axiom (\ref{axiom_quasianalytic}), $\widehat{G}$ is identically zero. This implies that $G(\pi(x),y)$ is zero on the hyperplane $x_i=0$, so that, by axiom (\ref{axiom_division}), $G(\pi(x),y)$ is divisible by $x_i$, as required.

Since the ideal $\cJ_z$ is generated by $x^{\theta}$, there is an integer $d$ such that $\omega_d \circ \pi$ coincides with 
$x^\theta$ up to multiplication by a local unit.  Thus, for any $z\in\pi^{-1}(x_0)$, there is $d$ such that 
$$
\frac{\partial^d \widetilde{G}}{\partial y^d}(z,f(x_0))\neq 0.
$$

Since we can work both locally and up to a locally finite modification, we will therefore assume that, near $x_0$,
the function $f$ is a root of a quasianalytic function $G(x,y)$ which is $y$-regular of order $d$ at $(x_0,f(x_0))$.

Arguing by induction on $d$, we can assume that the main result is true for every arc-quasianalytic function 
which is a root of a quasianalytic function that is $y$-regular of order $d'<d$.

Since $G$ is $y$-regular of order $d$, the equation
\begin{equation}\label{eq:impl}
\frac{\partial^{d-1} G}{\partial y^{d-1}} (x,y)=0,
\end{equation}
has nonvanishing $y$-derivative at $(x_0,f(x_0))$. Therefore, by the implicit function theorem
(axiom \eqref{axiom_inverse_mapping} and Remark \ref{rem:axioms}) \eqref{eq:impl} implicitly defines
a quasianalytic function $\varphi:U\to \IR$ in a neighborhood of $x_0$ (which we continue to call $U$),
such that $\vp(x_0) = f(x_0)$.  

Set
$$
c_i(x):=\frac{\partial^{i} G}{\partial y^{i}} (x,\varphi(x)),\quad x \in U,
$$
and apply Theorem \ref{thm:res} to the
ideal sheaf $\cI$
generated by the functions $c_i^{d! / (d-i)}$, $i<d-1$. This provides a composite of blowings-up
after which we can assume that $\cI$ is generated by a monomial $x^\alpha$, $\alpha \in \IN^n$; i.e.,
we can assume that $c_i^{d! / (d-i)} = x^\alpha \cdot  c_i^*(x)$, $i=0,\dots,d-2$, where $c_i^*(x)$
is a unit, for some $i$. Without loss of generality, we can also 
assume that $(x_0,f(x_0))=(0,0)$.

For simplicity, write 
$$
g(x):=f(x)-\varphi(x);
$$
then $g(0)=0$.
If $\cI = (0)$, then $g \equiv 0$, which means that $f \equiv \varphi$, and the result is clear.  Otherwise, consider the function
$$
g_1(x):=x^{-\frac{\alpha }{d!}}\cdot g(x),
$$
for  $x=(x_1,\dots,x_n)$  sufficiently close to  $0$ in $\{x: x^\alpha\neq  0\}$. Note that $g_1$ is a root of the function 
$$ 
G_1(x,y):=x^{-\frac{\alpha }{(d-1)!}}\cdot G(x,x^\frac{\alpha}{d!}y+\varphi(x)),
$$
defined for $x^\alpha\neq  0$.

We first check that $g_1$ is bounded. Taking the Taylor expansion
of $y\mapsto G(x,y+\varphi(x))$ and evaluating it at $g(x)$, we see that
\begin{equation}\label{eq:taylorG1}
\sum_{i=0}^{d-1}\left|\left(c_i^*\right)^{\frac{d-i}{d!}}(x)\cdot
x^{\frac{\alpha(d-i)}{d!}}\cdot \frac{ g(x)^{i}}{i!}\right| \geq
\left|\rho(x,g(x))g(x)^d\right|,
\end{equation}
where $\rho(x,t)$ is a $\cC^\infty$ function that does not vanish at $(0,0)$.
Assume that $g_1$ is not bounded. Then there is a sequence $x_\nu$ tending to zero such that
$$
|g(x_\nu)| \gg |x^\frac{\alpha}{d!}_\nu|,
$$
and therefore 
$$
|g(x_\nu)|^d \gg
|x_\nu ^{\frac{\alpha(d-i)}{d!}}|\cdot |g(x_{\nu})|^{i},
$$
for any $i < d$.  This contradicts \eqref{eq:taylorG1};
therefore $g_1$ is bounded.

By  Lemma \ref{lem:machine}, the  function 
$$
G_1(x^{d!},y)=x^{-\alpha d}\cdot G(x^{d!},x^{\alpha} y+\varphi(x^{d!}))
$$ 
is  quasianalytic and $y$-regular of
order at most $(d-1)$ at any point of the $y$-axis. This shows that $G_1$ extends to a continuous function  on the quadrant 
$\{(x,y):x_1\ge 0,\dots,x_n\ge 0\} $. More generally, given any $p \in \{0,1\}^n$, by the same argument (applying 
Lemma \ref{lem:machine} to the function $G_1((-1)^{p_1}x^{d!}_1,\ldots,\allowbreak (-1)^{p_n} x_n^{d!},y)$), we see that $G_1$ is continuous on  $U^p\times [-M,M]$,  if $U$ is a sufficiently small neighborhood of the origin in $\IR^n$ and $M >0$. Therefore, by Lemma \ref{lem:cont}, $|g_1|$ extends continuously to the origin.

\medskip
The problem is that the function $g_1$ might not be  arc-quasianalytic since $\alpha / d!$ \emph{a priori} 
is not an element of $\IN^n$. If $g_1$ does not tend to zero at the origin,
we will see that $\alpha / d!$ is necessarily an integer, and then  $G_1$ is quasianalytic. (This is Case I below.) The most
difficult case is when $g_1$ tends to zero at the origin (Case II). In this case, we
will see that we can iterate the preceding argument finitely many times, and find $G_2,\dots, G_k$ and
$g_2,\dots,g_k$, until $\lim_{x\to 0} |g_k(x)|\neq 0$. The order  of $y$-regularity of the
$G_j$ will be strictly decreasing, forcing the process
to end after finitely many steps. The construction of the successive $G_j$ will be done by a method very similar to that
above; nevertheless, for technical reasons, we have to  replace
Lemma \ref{lem:machine} with Lemma \ref{lem:machine2}.

We thus distinguish two cases.

\medskip\noindent
\emph{Case I.\,} $\lim_{x \to 0} |g_1(x)|\neq 0$.
By construction, 
\begin{equation}\label{eq:g_k}
g_1(x)=x^{-\gamma}(f(x)-\varphi (x)),
\end{equation}
for some $\gamma \in \IQ_+^n$. We claim that $\gamma \in \IN^n$.

To see this,
consider the curve $\beta(t)=(t,\sigma,\dots,\sigma)$ where $\sigma$ is a sufficiently small positive real
number. Since $\varphi$  is quasianalytic and $f$ is arc-quasianalytic,  $(f-\varphi)(\beta(t))$ is a quasianalytic function.  
Set  $\gamma=(\gamma_1,\dots, \gamma_n)$. Since
$|g_1|$ is continuous and does not vanish at $0$, then, by \eqref{eq:g_k},
$\gamma_1$ equals the  exponent of the first term of the Taylor expansion of the arc $(f-\varphi)(\beta(t))$. 
Thus $\gamma_1$ is an integer. Of course, we can repeat the argument for every $\gamma_j$.

Now, by Lemma \ref{lem:ineq}, $G_1$ is quasianalytic (since $G_1$ is bounded on the nonnegative quadrant). 
Moreover, by Lemma \ref{lem:cont}, $g_1$ extends continuously to an arc-quasianalytic function in a neighbourhood 
of the origin. Therefore, $g_1$ is an arc-quasianalytic root of
a quasianalytic function which is $y$-regular of order $i<d$ in a neighbourhood of $0$.  By induction on the order 
of regularity, $g_1$ becomes quasianalytic after a local modification. By \eqref{eq:g_k}, therefore so does $f$, as required.

\medskip\noindent
\emph{Case II.\,}  $\lim_{x \to 0} g_1(x)=0$. We can assume that $g$ is not identically zero since
otherwise the result is clear. Case II ultimately reduces to Case I after iterating an argument similar to that used to construct $G_1$ and $g_1$.
Given $u \in \IN$ and $x=(x_1,\ldots,x_n)$, we set $x^u:=(x_1^u,\ldots,x_n^u)$. We first prove the following.

\begin{claim}\label{claim}
Let $G:W \times (a,b) \to \IR$ be a quasianalytic function (not identically zero), where $W$ is a neighbourhood of $0$
in $\IR^n$, and let $g:W \to (a,b)$ be an arc-quasianalytic function such that $G(x,g(x))\equiv 0$. Let $\pi_1:V \to W$ be 
a local modification, and let $A:V \to \IR$ be a  unit (i.e., a nowhere vanishing quasianalytic function).

Given $\alpha_1, \beta_1 \in \IQ^n_+$, set 
$$
G_1(x,y):=x^{-\beta_1} G(\pi_1(x), A(x)\cdot x^{\alpha_1} y),
$$ 
and
$$
g_1(x):=\frac{x^{-{\alpha_1}}}{A(x)}\cdot  g(\pi_1(x)),
$$
(so that $g_1(x)$  is a root of $G_1(x,y)$).

Assume that $\alpha_1$ and $\beta_1$ are such that $G_1(x^u,y)$ is quasianalytic and $y$-regular of order at
most $i>0$ at all $(x,g_1(x))$, $x \in V$, for some positive integer $u$. Assume also that $\lim_{x \to 0} g_1(x)=0$.

Then there exists  a locally finite  covering by modifications such that, for each member
 $\pi_2:U \to W$ of this covering, where $U$ is a neighbourhood of the origin and $\pi_2(0)=0$, 
 there are  $\alpha_2, \beta_2 \in \IQ^n_+$ and a unit $B:U \to \IR$ such that:
\begin{enumerate}
\item If
\begin{equation}\label{eq:G_2}
G_2(x,y):=x^{-\beta_2}\cdot G(\pi_2(x), B(x)\cdot x^{ \alpha_2}y), 
\end{equation}
then, for a suitable positive integer $u'$, 
$G_2(x^{u'},y)$ is a quasianalytic function which is $y$-regular of order $i'<i$ at any $(a,0)$, 
$a  \in  \pi_2^{-1}(0)$.
\item The function
$$g_2(x):=\frac{x^{ -\alpha_2}}{B(x)}\cdot g(\pi_2(x))
$$
(which is a root of $G_2(x,y)$) is bounded on $U\setminus \{x^{\alpha_2}=0\}$.
\end{enumerate}
\end{claim}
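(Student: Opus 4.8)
The plan is to carry out, at the next level, the same construction that produced $G_1$ and $g_1$ from $G$ and $g$ in the first part of the proof of Theorem \ref{thm:main}, now starting from the quasianalytic function $\Phi(x,y):=G_1(x^u,y)$ and its root $\psi_0(x):=g_1(x^u)$, but using Lemma \ref{lem:machine2} in place of Lemma \ref{lem:machine}. Since $g_1$ is bounded and $\lim_{x\to 0}g_1(x)=0$, the graph of $\psi_0$ accumulates onto $(0,0)$, so $\Phi(0,0)=0$; after a preliminary application of Theorem \ref{thm:res} (as in the first reduction in the proof of Theorem \ref{thm:main}) we may assume in addition that $\Phi$ is $y$-regular of some finite order $i_0$ at each point of the fibre over $0$.

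Write $c_j(x):=(\p^j\Phi/\p y^j)(x,0)$ for $j\le i_0$; then $c_j(0)=0$ for $j<i_0$, $c_{i_0}(0)\ne 0$, and the $c_j$ with $j<i_0$ are not all identically zero (otherwise $\Phi$ would be divisible by $y^{i_0}$, forcing $\psi_0$, hence $g$, to vanish identically --- a case already disposed of). Apply Theorem \ref{thm:res} to the ideal sheaf generated by the functions $c_j^{\,i_0!/(i_0-j)}$, $j<i_0$. Over a neighbourhood of each point of the fibre, and in each orthant (reached by a sign-twisted substitution $x_i\mapsto\pm x_i^{\,u'}$), this produces a composite $\sigma$ of admissible blowings-up after which the ideal is a monomial $x^{\delta}$ with cofactors having no common zero; writing $\nu:=\delta/i_0!\in\IQ^n_+$, we then have $c_j\circ\sigma=x^{\nu(i_0-j)}\cdot(\text{quasianalytic})$, $j<i_0$, with a unit cofactor for some such $j$. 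This is exactly the hypothesis of Lemma \ref{lem:machine2} (with $d=i_0$, $\alpha=\nu$, and $\Phi(\sigma(x),y)$ in place of $G(x,y)$): it gives that, after the substitution $x\mapsto x^{i_0!}$, the function $x^{-\nu i_0}\Phi(\sigma(x),x^{\nu}y)$ is quasianalytic and $y$-regular of order $k$ at each $(a,0)$ over the fibre, with $k<i_0$. Here one must use Lemma \ref{lem:machine2} rather than Lemma \ref{lem:machine}: in the normalization prescribed by the conclusion there is no translation term, so one cannot first straighten a branch (which would make the $(i_0-1)$-st coefficient vanish) and one only obtains regularity along the zero-section --- which is all that is asked. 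Comparing with the fact that $\Phi$ is $y$-regular of order $\le i$ along the graph of $\psi_0$, one checks moreover that $k<i$.

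By the same bookkeeping as in the passage from $G$ to $G_1$ --- now tracking how the data $\pi_1,A,\alpha_1,\beta_1$ transform, and using that coordinate functions pull back under a composite of blowings-up to monomials times units --- one reads off from $\sigma$ a modification $\pi_2:U\to W$ (essentially $\pi_1$ followed by $\sigma$), a unit $B$, exponents $\alpha_2,\beta_2\in\IQ^n_+$, and an integer $u'$ --- a common multiple of $u$, $i_0!$ and all denominators that have appeared --- such that $G_2(x^{u'},y)$ coincides with the quasianalytic function just produced; this is conclusion (1), with $i'=k$. For conclusion (2) one argues as in the proof that $g_1$ is bounded: substituting $y=g_2(x)$ into the Taylor expansion in $y$ of $G(\pi_2(x),B(x)x^{\alpha_2}y)$ and using that, after $x\mapsto x^{u'}$, the coefficients are monomials with a unit cofactor in some degree $<i_0$, one obtains an inequality of the type of \eqref{eq:taylorG1} that would be violated were $g_2$ unbounded; hence $g_2$ is bounded on $U\setminus\{x^{\alpha_2}=0\}$. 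Finally, the compactness clause of Theorem \ref{thm:res} lets one choose the charts so that the maps $\pi_2$ form a locally finite covering of $W$.

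The main obstacle --- and the reason everything is routed through $x\mapsto x^{u'}$ and Theorem \ref{thm:res} --- is that the rational exponents $\alpha_j,\beta_j$ render the functions $G_j$ themselves only $C^\infty$ (indeed quasianalytic only off the coordinate hyperplanes within each orthant), so that genuine quasianalyticity is recovered only after clearing denominators, the key divisibility at each stage being supplied by axiom \eqref{axiom_division}; this is precisely what makes Lemma \ref{lem:machine2} the appropriate tool and confines the regularity conclusion to the zero-section. The points needing the most care are the verification that the order genuinely drops ($k<i$), the admissibility --- smoothness and normal crossings with the exceptional divisor --- of the new centres, and the assembling of the $\pi_2$ into a bona fide locally finite covering of $W$.
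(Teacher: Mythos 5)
Your proposal follows the paper's overall strategy (monomialize the $y$-derivatives with Theorem \ref{thm:res}, then invoke Lemma \ref{lem:machine2}), but there is a genuine gap in how the modification $\pi_2$ is produced. You resolve the ideal generated by the functions $c_j(x):=(\p^j\Phi/\p y^j)(x,0)$, where $\Phi(x,y)=G_1(x^u,y)$. These functions live on the \emph{substituted} space --- the domain on which $x\mapsto x^u$ lands in $V$ --- so the resulting modification $\sigma$ sits over that branched-root space, not over $V$. Consequently the map you need, $x\mapsto\pi_1(\sigma(x)^u)$, inserts the power map $x\mapsto x^u$ between two modifications; since the power map is not a composite of blowings-up, there is no reason for this composite to be a modification $\pi_2:U\to W$ (nor to factor as $\pi_2(x^{u'})$ for a modification $\pi_2$ and integer $u'$). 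Your phrase ``(essentially $\pi_1$ followed by $\sigma$)'' does not typecheck for exactly this reason.

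The paper sidesteps this by introducing the $u$-th powers $\lambda_j(x):=(\p^jG_1/\p y^j)(x,0)^u$, which are functions \emph{on $V$ itself} (no $x^u$ substitution). The nontrivial step --- which your argument skips --- is proving that the $\lambda_j$ are quasianalytic: one shows from the quasianalyticity of $(\p^jG_1/\p y^j)(x^u,0)$ that $|(\p^jG/\p y^j)(\pi_1(x),0)^u|\leq C^u|x^{u(\beta_1-j\alpha_1)}|$ on the nonnegative quadrant, and then invokes Lemma \ref{lem:ineq}. Once the $\lambda_j$ are known to be quasianalytic, Theorem \ref{thm:res} is applied over $V$, yielding a modification $\pi:U\to V$, and $\pi_2:=\pi_1\circ\pi$ is a genuine modification of $W$; the auxiliary power $x\mapsto x^{u'}$ then only enters through the expression $G_2(x^{u'},y)$, as the claim allows. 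A secondary point: your drop-in-order claim ``$k<i$'' is asserted but not established. It requires knowing the $y$-regularity order $i_0$ of $\Phi$ at $(0,0)$ satisfies $i_0\le i$; the paper avoids this by working directly with the ideal built from the exponents $i!/(i-j)$ for $j<i$, so that Lemma \ref{lem:machine2} applies with $d\le i$ and gives $i'<i$ automatically.
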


\begin{proof}[Proof of Claim \ref{claim}]
Note that the hypotheses of the claim are preserved by a small translation $(x,y) \mapsto (x+x_0,y)$, 
$x_0 \in \pi_1^{-1}(0)$. Therefore, we can focus nearby a point of $V$ and assume it is the origin 
(and we can also assume $\lim_{x\to 0}g_1(x)=0$, since otherwise we are in Case I).

We will show that each of the functions
$$
\lambda_j(x) :=\frac{\partial ^j G_1}{\partial y^{j}}(x,0)^u, \quad j=0,\ldots, i-1,
$$
is quasianalytic, for $u$ as in the hypotheses of the claim. Indeed, $\la_j$ coincides (up to multiplication by a unit)
with
$$
x^{u(j\alpha_1-\beta_1)}\cdot \frac{\partial^j G}{\partial y^j}(\pi_1(x), 0)^u.
$$ 
Since $(\partial ^j G_1 / \partial y^{j})(x^u,0)$  is quasianalytic, the function 
$$
x^{u(j\alpha_1 -\beta_1)}\cdot \frac{\partial ^j G}{\partial y^{j}}(\pi_1(x^u),0)
$$ 
is  bounded near $0$. Consequently, in some neighbourhood of $0$, 
$$
\left|\frac{\partial ^j G}{\partial
y^{j}}(\pi_1(x^u),0)\right|\leq C \left|x^{u(\beta_1-j\alpha_1)}\right|,  
$$
where $C$ is a positive constant.
Substituting $x^{1/u}$ for $x$ in this inequality and raising both sides to the power $u$, we get 
$$
\left|\frac{\partial ^j G}{\partial
y^{j}}(\pi_1(x),0)^u\right|\leq C^u \left|x^{u(\beta_1-j\alpha_1)}\right|,  
$$
for $x$ near zero in the quadrant $\{x_1\geq 0,\dots,x_n \geq 0\}$.
By Lemma \ref{lem:ineq}, the latter implies that  $\lambda_j$ is quasianalytic.

Now let $\cI$ denote the ideal sheaf generated by the $\lambda_j^{i! / (i-j)}$, $j<i$, and apply 
Theorem \ref{thm:res} to the ideal sheaf $\cJ$ given by the product of $\cI$ and all 
$\lambda_j^{i! / (i-j)}$, $j<i$.
The theorem provides a composite of blowings-up $\pi:U \to V$ such that
$\pi^*\cJ$ is a normal crossings divisor; therefore, $\pi^*\cI$ is a normal crossings divisor
and also each $\lambda_j^{i! / (i-j)}$, $j<i$, becomes a 
monomial times a unit in suitable local coordinates.  We can assume also that the components 
of $\pi$ are monomials times units.  Let $\pi_2:=\pi_1\circ \pi$. 

Since the components of $\pi$ are normal crossings, we have
$$
G_1(\pi(x),y)=A''(x)x^{-\beta_1'} G(\pi_2(x), A'(x)\cdot x^{\alpha_1'} y),
$$
for some $\alpha_1', \beta_1' \in \IQ_+^n$ and some units $A', A''$. It follows from 
Lemma \ref{lem:ineq} that $G_1(\pi(x^u),y)$ is quasianalytic.

Up to multiplication by a unit, 
$$
\frac{\partial^j G_1}{\partial y^j}(\pi(x^u),0) = \lambda_j\circ \pi(x).
$$
Hence the hypotheses of Lemma \ref{lem:machine2} are satisfied by the quasianalytic function $G_1(\pi(x^{u}),y)$.
By Lemma \ref{lem:machine2}, therefore, there are $\alpha_2, \beta_2 \in \IQ_+$, a unit $B:U \to \IR$, 
and a positive integer $u'$, 
such that, if $G_2$ is defined as in \eqref{eq:G_2}, then the function $G_2(x^{u'},y)$  
is quasianalytic and $y$-regular of order at most $i'<i$ at $(0,0)$. This gives (1).

The argument used in \eqref{eq:taylorG1} and immediately following for $g_1$ clearly also 
applies to show that $g_2$ is bounded, giving (2).
This completes the proof of Claim \ref{claim}. 
\end{proof}

We can now finish the proof of Case II and therefore of the theorem. 
Note that $G_1$ and $g_1$ satisfy the assumptions of Claim \ref{claim}. 
We can therefore apply the claim to get $G_2$ and $g_2$,  and show that $|g_2(x)|$ extends continuously to the points 
where $x^\alpha =0$, by the same argument used for $g_1$.

If $\lim_{x \to 0} |g_2(x)|\neq 0$, then we are done, according to Case I. Otherwise, we can apply Claim \ref{claim}
to $G_2$ and $g_2$. Iterating the argument, we get two sequences of functions $G_k$ and $g_k$, $k=1,\ldots,l$, such that, for every $k$, $\lim_{x \to 0} g_k (x)=0$ and $G_k(x^{u},y)$ is $y$-regular of order $i_k$ at $(0,0)$, for suitable $u$, and
$i_2,\dots , i_l$ is a strictly decreasing sequence of positive integers. Of course, such a sequence cannot continue indefinitely, 
so that $\lim_{x \to 0} |g_l (x)| \neq 0$, for some $l$. (If $G_l$ is $y$-regular of order $0$ at $(0,0)$, then $G_l(0,0)\neq 0$, so that $\lim_{x \to 0} |g_l (x)|\neq 0$.)
In other words, we are eventually in Case I.
\end{proof}

\bibliographystyle{amsplain}

\end{document}